\newcommand{\agw}{\alpha^\star_{\GW}}
\newcommand{\argw}{\alpha^\star_{\RGW}}
\newcommand{\argwprime}{\tilde{\alpha}^{\star}_{\RGW}}
\newcommand{\argwhat}{\hat{\alpha}^{\star}_{\RGW}}
\newcommand{\Ber}{\mathrm{Ber}}
\newcommand{\dd}{\mathrm{d}}
\newcommand{\E}{\mathbb E}
\newcommand{\calE}{\mathcal E}
\newcommand{\ee}{\varepsilon}
\newcommand{\Eg}{E^\star}
\newcommand{\Egw}{E^\GW_\calQ}
\newcommand{\Egwhat}{E^\GW_{\calQhat}}
\newcommand{\Egwhatdelta}{E^\GW_{\calQhat_\delta}}
\newcommand{\Ergw}{E^\RGW_\calQ}
\newcommand{\Ergwhat}{E^\RGW_{\calQhat}}
\newcommand{\Ergwhatdelta}{E^\RGW_{\calQhat_\delta}}
\newcommand{\KL}{\mathrm{KL}}
\newcommand{\kl}{\mathrm{kl}}
\newcommand{\calP}{\mathcal P}
\newcommand{\calPprime}{\tilde{\mathcal P}}
\newcommand{\calQ}{\mathcal Q}
\newcommand{\calQprime}{\tilde{\mathcal{Q}}}
\newcommand{\calQhat}{\hat{\mathcal{Q}}}
\newcommand{\R}{\mathbb R}
\newcommand{\scrF}{\mathscr{F}}
\newcommand{\scrP}{\mathscr{P}}
\newcommand{\calX}{\mathcal X}
\newcommand{\alphamin}{\alpha_{\min}}
\newcommand{\alphamax}{\alpha_{\max}}
\newcommand{\fab}{F_{\alpha,\beta}}
\newcommand{\fabc}{F^c_{\alpha,\beta}}
\newcommand{\gab}{G_{\alpha,\beta}}
\newcommand{\diff}{\,\mathrm{d}} 
\newcommand{\GRO}{\mathrm{GRO}} 
\newcommand{\GROW}{\mathrm{GROW}} 
\newcommand{\REGROW}{\mathrm{REGROW}} 
\newcommand{\RGW}{\mathrm{RGW}} 
\newcommand{\GW}{\mathrm{GW}} 
\renewcommand{\phi}{\varphi}
\def\secondtitle#1#2#3{
	\clearpage
	\setcounter{page}{1}
	\gdef\@title{#1}
	\gdef\@author{#2}
	\gdef\@date{#3}
	\maketitle
}
\newtheorem{example}{Example}
\newtheorem{lemma}{Lemma}
\newtheorem*{lemma*}{Lemma}
\newtheorem{proposition}{Proposition}
\newtheorem{theorem}{Theorem}
\newtheorem{corollary}{Corollary}
\theoremstyle{remark} 
\newtheorem{remark}{Remark}
\begin{document}
	\makeatletter
	\renewcommand*{\@fnsymbol}[1]{\ensuremath{\ifcase#1\or*\or\mathsection\or\else\@ctrerr\fi}}
	\makeatother
	\title{
		Optimal e-values for testing the mean of a bounded random variable against a composite alternative}
	\author{Sebastian Arnold\thanks{Machine Learning Group, Centrum Wiskunde \& Informatica (CWI), \texttt{sebastian.arnold@cwi.nl}} \and Eugenio Clerico\thanks{Department of Statistics, University of Oxford, \texttt{eugenio.clerico@gmail.com}}}
	\maketitle
	\begin{abstract}
		We derive explicitly the  e-values with optimal (relative) growth rate \emph{in the worst case} for testing the mean of a bounded random variable, thereby providing the first application of the (RE)GROW quality criteria beyond the assumption of mutually absolutely continuous hypotheses for e-values originally proposed by Gr\"unwald et al.\ (2024). For both criteria, we explicitly characterise the alternatives that are most difficult to test against and show that they admit a meaningful interpretation. We give two important examples in which REGROW provides a powerful quality criterion to choose optimal e-variables whereas GROW leads to trivial solutions.
	\end{abstract}
	
	Keywords: \emph{e-variable; optimal growth rate; nonparametric mean-testing; composite alternative.}

	\section{Introduction}
	E-values are a statistical tool for measuring evidence against a null hypothesis that has attracted increasing interest in recent years as a flexible and safe alternative to p-values; see, e.g., \cite{grunwald2024safe}, \cite{vovk2021evalues}, and \cite{ramdas2023gametheoretic}. An \emph{e-value} is the observed value (on the data) of an \emph{e-variable}, a non-negative random variable with expectation  bounded by $1$ under the null hypothesis $\calP$. Markov’s inequality ensures that large e-values are unlikely under the null, and the magnitude of an observed e-value directly quantifies statistical evidence against the null.
	We refer to  \cite{ramdas2025hypothesis} for a comprehensive introduction to the subject.

	A natural question when testing with e-values is how to choose the right e-variable to use. This is typically addressed by specifying an alternative hypothesis $\calQ$ representing the distributions under which we would particularly like the test to reject the null. 
	When  $\mathcal Q$ is \emph{simple} (namely, it consists of a single distribution $Q$), the most commonly adopted notion of `power' for an e-variable $E$ is the \emph{expected growth rate} (or \emph{e-power}) $\mathbb E_Q[\log E]$, which quantifies the rate at which evidence accumulates under the alternative. Maximising expected logarithmic growth is well motivated by gambling theory, information theory, and finance, and goes back to the seminal work of \citet{kelly1956a}; see also \citet{cover2006elements} and \cite{shafer2021testing}.
	A central result in the theory of e-values states that there exists a ($Q$-almost surely) unique e-variable $\Eg_Q$ maximising $\mathbb E_Q[\log E]$ among all valid e-variables. This optimal $\Eg_Q$, known as the \emph{growth-rate optimal} (GRO) or \emph{numéraire} e-variable under $Q$, can be expressed as a generalised likelihood ratio between $Q$ and a (sub)probability measure obtained via a generalised \emph{reverse information projection} of $Q$ onto a convexification of the null~\citep{grunwald2024safe,larsson2025the}.
	
	Whereas expected growth rate is a natural optimality criterion for a simple alternative, extending it to a \emph{composite} alternative (i.e., a family of distributions) is less straightforward. Following \cite{grunwald2024safe}, we consider two criteria: \emph{GROW} (growth-rate optimal in the worst case) and \emph{REGROW} (relative growth-rate optimal in the worst case). Under a finite-KL condition, \citet{grunwald2024safe} establish  existence and almost-sure uniqueness of the corresponding optimal e-variables when all measures in the null and alternative admit densities with respect to a common reference measure, a condition we call the \emph{absolute continuity (AC)} assumption. General existence and characterisation results beyond this dominated setting remain largely unexplored.
	
	\textbf{Main contribution.} In this paper, we study the GROW and REGROW criteria for testing the mean of a bounded real-valued random variable. More precisely, we consider hypotheses of the form $\mathcal P = \{\text{The mean is ($\leq$) $\mu_0$}\}$ against $\mathcal Q = \{\text{The mean is ($>$) $\mu_1$}\}$, and variations of these, where both $\mathcal P$ and $\mathcal Q$ consist of distributions supported on a common bounded interval. These hypotheses do not satisfy the AC assumption. Nevertheless, we show that the corresponding GROW and REGROW e-variables are well defined and derive explicit characterisations of the optimal e-variables and their associated worst-case distributions.
	
	Our analysis combines an existing admissibility reduction with a new convex-analytic reformulation. First, we leverage the fact that the \emph{coin-betting e-class}, a family of e-variables parametrised by a single scalar, forms the minimal admissible class for our testing problem \citep{clerico2025on,wang2026ebacktesting}. This reduces the optimisation over all e-variables to a one-dimensional problem. Our key step is then to make the convex structure of the worst-case optimisation explicit by rewriting it in terms of convex envelopes. Analysing these envelopes identifies the relevant e-variables and worst-case distributions, reducing the final optimisation to balancing two monotone terms. Beyond yielding explicit solutions in the present setting, our proof ideas may provide a useful strategy for studying (RE)GROW for a broader class of hypothesis testing problems.
	
	\textbf{Notation and definitions.} 
	Let $\calX=[A,B]$, for $-\infty <A<B<\infty$, endowed with the standard relative topology and its Borel sigma-field. We let $\scrP$ be the set of Borel probability measures on $\calX$, and $\scrF$ the family of Borel measurable functions $f:\calX\to\R$. For $P\in \scrP$ and $f\in\scrF$ integrable with respect to $P$, we denote by $\E_P[f(X)]$ (or $\E_P[f]$) the expectation $\int_{A}^B f(x) \diff P(x)$ of $f$ under $P$. Given a \emph{null hypothesis} $\calP \subseteq \scrP$, $E\in\scrF$ is an \emph{e-variable} (for $\calP$) if it is non-negative, and $\E_P[E]\leq1$ for all $P\in \calP$. We let $\calE(\calP)\subseteq \scrF$ be the family of all e-variables for $\calP$. We call $P\in \scrP$ \emph{absolutely continuous} with respect to a Borel measure $\nu$ on $\calX$, if it admits a density with respect to $\nu$, in which case we write $P\ll \nu$. For $x\in [A,B]$, $\delta_x$ is the Dirac measure at $x$.

	\section{Preliminaries}\label{sec:(RE)GRO(W)}
	Let $\calP\subseteq\scrP$ be a fixed null hypothesis.
	For a simple alternative $\calQ = \{Q\}$, with $Q\in \scrP$, we define the \emph{e-power} of an e-variable $E\in \calE(\calP)$ as $\E_Q[\log E]$, with the convention $\E_Q[\log E]=-\infty$ whenever $\E_Q[\max\{0,-\log E\}] = \infty$. Under the AC assumption and a finite-KL condition, \cite{grunwald2024safe} showed that the e-power admits a unique (up to $Q$-null sets) maximiser over $\calE(\calP)$. We call this maximiser the \emph{GRO e-variable} for $Q$ and denote it by $\Eg_Q$.
	In a follow-up work, \cite{larsson2025evariables} extended this result, showing that $\Eg_Q$ exists and is well-defined under essentially no restriction on $\calP$ and $Q$. We denote the e-power of $\Eg_Q$ by $\GRO(Q)$, 
	\begin{equation*}
		\GRO(Q) = \sup_{E \in \mathcal{E}(\calP)} \E_{Q} [\log E] = \E_Q[\log \Eg_Q]\,.
	\end{equation*}

	If the alternative $\calQ$ is composite, that is, it consists of several distributions, the problem of how to pick the best e-variable becomes more complex. 
	Under the GROW criterion, we consider the worst-case  $\inf_{Q\in\calQ}\E_Q[\log E]$ for each $E$, and define the \emph{optimal growth rate in the worst case} 
	\begin{equation}\label{eq:def_GROW}
		\GROW(\calQ) = \sup_{E \in \calE(\calP)} \inf_{Q\in \calQ}\E_{Q} [\log E]\,. 
	\end{equation} If a maximiser of~\eqref{eq:def_GROW} exists (under the AC assumption, the first generalisation of Theorem 1 in \citealt{grunwald2024safe} ensures existence and uniqueness), we call it the GROW e-variable under $\calQ$ and denote it by $\Egw$. 
	While GROW provides a clear and natural optimality principle, it is often too strict and over-pessimistic. Prioritising the least favourable alternative in $\calQ$ comes at the expense of performance under the other alternative measures in $\calQ$, and can push towards a trivial solution that can never reject the null. This happens in particular if the null and the alternative are not properly separated, as illustrated in the example below.
	To address this issue, \cite{grunwald2024safe} introduced the \emph{relative optimal growth-rate in the worst case} as 
	\begin{equation}\label{eq:def_REGROW}
		\REGROW(\calQ) = \sup_{E \in \mathcal{E}(\calP)} \inf_{Q\in \calQ}\E_{Q} [\log E-\GRO(Q)]\,.
	\end{equation}
	When a maximiser for $\REGROW(\calQ)$ exists, we call it the REGROW e-variable, and denote it by $\Ergw$. Under the AC assumption, \cite{grunwald2024safe} established existence and (almost sure) uniqueness of the REGROW e-variable.
	Rather than requiring performance guarantees that hold uniformly over $\calQ$, REGROW evaluates an e-variable relative to the best performance achievable under each individual alternative. This  is close in spirit to the idea of regret in online learning, where the quality of a decision is assessed relative to the best decision in hindsight. 
	Evaluating performance relative to the oracle e-variable tailored to the true distribution, REGROW avoids sacrificing easy measures in $\calQ$ merely because some others are harder to distinguish from $\calP$.
	
	The following example illustrates a case where $\Egw$ is powerless, yet $\Ergw$ is non-trivial.

	\begin{example}[Testing a Bernoulli]\label{ex:bern} For
		fixed $\mu_0\in (0,1)$ and $\calP = \{\Ber(\mu_0)\}$, we consider the one-sided alternative $\calQ_1 = \{\Ber(\mu)\mid\mu>\mu_0\}$ and the two-sided alternative $\calQ_2 = \{\Ber(\mu)\mid\mu\neq\mu_0\}$. We can identify each e-variable with a vector $Z\in\R_{\geq0}^2$ via the embedding $E\mapsto (E(1), E(0))^\top$. It is easily checked that $\calE(\calP)$ consists of all non-negative vectors component-wise dominated by some $Z_\ee = \big({\ee}/{\mu_0},{(1-\ee)}/{(1-\mu_0)}\big)^\top$, $\ee\in[0,1]$, and that $Z_\mu$ is the GRO e-variable for the simple alternative $\{Q_\mu\}=\{\Ber(\mu)\}$, for any $\mu\in[0,1]$. Moreover,
		$$\GRO(Q_\mu) = \E_{Q_\mu}[\log Z_\mu] = \mu\log\frac{\mu}{\mu_0} + (1-\mu)\log\frac{1-\mu}{1-\mu_0} = \kl(\mu, \mu_0)\,,$$ where $\kl(\theta,\theta')=\KL(\Ber(\theta)\|\Ber(\theta'))$, for $\theta,\theta'\in[0,1]$.
		It follows that the trivial growth rates $\GROW(\mathcal Q_1) = \inf_{\mu>\mu_0} \GRO(Q_\mu)=0=\GROW(\mathcal Q_2)$ are achieved by the constant e-variable $E_{\calQ_1}^{\GW}=E_{\calQ_2}^{\GW} = Z_{\mu_0} = (1,1)^\top$. Next, we show that the REGROW criterion provides us with non-trivial e-variables. Fix $\ee\in[0,1]$ and $\mu\neq\mu_0$. Then,
		$$\E_{Q_\mu}[\log Z_\ee - \log Z_\mu] = \mu\log\frac{\ee}{\mu} + (1-\mu)\log\frac{1-\ee}{1-\mu} = -\kl(\mu,\ee)\,.$$
		By convexity of the KL divergence in both its arguments, it follows that
		\begin{align}
			\REGROW(\calQ_1) &= \sup_{\ee\in[0,1]}\inf_{\mu>\mu_0}\big(-\kl(\mu,\ee)\big) = \sup_{\ee\in[0,1]}\min\big\{-\kl(\mu_0,\ee), -\kl(1,\ee)\big\}\,, \label{eq:REGROW_Bernoulli_example}\\
			\REGROW(\calQ_2) &= \sup_{\ee\in[0,1]}\inf_{\mu\neq\mu_0}\big(-\kl(\mu,\ee)\big) = \sup_{\ee\in[0,1]}\min\big\{-\kl(0,\ee), -\kl(1,\ee)\big\}\,.\label{eq:REGROW_Bernoulli_example_2}
		\end{align}
		By monotonicity and concavity, these suprema are uniquely attained at $\ee^\star_1$ and $\ee^\star_2$, respectively, for which the two terms in the minimum coincide. By symmetry, $\ee^\star_2=1/2$, while a direct calculation yields $\ee^\star_1=(1 + (1-\mu_0)\,\mu_0^{\mu_0/(1-\mu_0)})^{-1}$.
		That is, the REGROW e-variables are $E_{\calQ_1}^\RGW = Z_{\ee^\star_1}$ and $E_{\calQ_2}^\RGW = Z_{1/2}$, 
		where $E_{\calQ_1}^\RGW$ is not the trivial $(1,1)^\top$ since $\ee^\star_1>\mu_0$, and where $E_{\calQ_2}^\RGW$ is non-trivial as long as we are not in the symmetric case $\mu_0= 1/2$. 
		Moreover, we see that the `worst-case' distributions, namely the alternatives for which the infima in \eqref{eq:REGROW_Bernoulli_example} and \eqref{eq:REGROW_Bernoulli_example_2} are attained, are given by $\Ber(\mu_0)$ and $\Ber(1)$, and by $\Ber(0)$ and $\Ber(1)$, respectively.  Interestingly, for the two-sided $\calQ_2$, the two worst-case distributions do not depend on $\mu_0$, while, for the one-sided $\calQ_1$, the null appears as a second worst-case distribution alongside the boundary distribution Ber(1). This aligns with our general findings as discussed in the next section. 
		We refer the reader to Figure~\ref{fig:example} in the Supplement for a visual intuition behind the Bernoulli example.
	\end{example}
	
	We now introduce the general framework underlying our main result, and take as sample space an arbitrary bounded interval $\calX=[A,B]$, with $-\infty<A<B<\infty$.
	For fixed $\mu_0\in(A,B)$, we consider the null  $\calP$ consisting of all probability measures on $[A,B]$ with mean $\mu_0$, that is $$\calP = \{P\in\scrP \mid \E_P[X] = \mu_0\}\,.$$ 
	
	Although $\calP$ does not satisfy the AC assumption, $\calE(\calP)$ admits a simple characterisation \citep{clerico2025on}: it consists of all non-negative functions dominated by a 
	\emph{coin-betting e-variable} 
	\begin{equation}\label{eq:def_coin_betting_e-value}
		E_\alpha\,:\,x\mapsto 1+\alpha (x-\mu_0)
	\end{equation} 
	for $\alpha \in I_{\mu_0}=[\alphamin,\alphamax]$, with
	\begin{equation}\label{eq:alphaminmax}\alphamin=(\mu_0-B)^{-1}<0\,, \quad\alphamax=(\mu_0-A)^{-1}>0\,.\end{equation}
	As a consequence, we can restrict our attention to e-variables of the form~\eqref{eq:def_coin_betting_e-value}, and  finding (RE)GROW e-variables for a given alternative reduces to finding the corresponding optimal betting parameters $\alpha\in I_{\mu_0}$. 
	We refer the reader to \cite{orabona2024tight} for an explanation of why functions of the form \eqref{eq:def_coin_betting_e-value} can be interpreted as the outcomes of a betting game on a coin, and to Proposition~\ref{prop:coin_betting_is_an_optimal_class} and Corollary~\ref{cor:onesided} in the Supplementary Material for a detailed exposition on the admissibility of coin-betting e-variables that we leverage in our analysis.
	
	\section{(RE)GROW optimal e-variables for bounded  mean testing} \label{Sec:Results}

	To state our main result, we introduce the following functions on $[A,B]$, with $\alpha,\beta\in I_{\mu_0}$:
	\begin{align*}
		\textstyle
		f_\alpha(x) = \log\big(1\!+\!\alpha(x\!-\!\mu_0)\big)\,, \; \fab(x) = {f_\alpha(x)}-{f_\beta(x)}\,,\;
		\gab(x)  = \frac{(B-x)F_{\alpha,\beta}(A) + (x-A)F_{\alpha,\beta}(B)}{B-A}\,,
	\end{align*}
	with the conventions $\log0=-\infty$ and $0/0=1$. The function $f_\alpha$ will be used to analyse GROW, $\fab$ will naturally arise when characterising REGROW, while $\gab$ is simply the linear interpolation between $\fab(A)$ and $\fab(B)$. 
	
	\begin{theorem}\label{thm:master_thm}
		Let  $\mu_0,\mu_1\in (A,B)$ with $\mu_1>\mu_0$. In the following three cases, the (RE)GROW e-variables $\Egw$ and $\Ergw$ exist.
		\begin{enumerate}[(a)]
			\item For $\calP = \{P\in\scrP\mid\E_P[X]=\mu_0\}$ vs.\ $\calQ = \{Q\in\scrP\mid\E_Q[X]=\mu_1\}$, we have $\Egw= E_{\agw}$ and $\Ergw= E_{\argw}$, with 
			\begin{equation}\label{eq:GROW_betting_parameter}\agw = \frac{\mu_1-\mu_0}{(\mu_0-A)(B-\mu_0)}\,\in (0,\alphamax)\,,\end{equation}
			and $\argw$ the unique solution of $F_{\argw,\alphamax}(\mu_1)=G_{\argw,\agw}(\mu_1)$ with $\argw>\agw$.
			\item  For $\calPprime = \{P\in\scrP\mid\E_P[X]\leq\mu_0\}$ vs.\ $\tilde{\calQ} = \{Q\in\scrP\mid\E_Q[X]>\mu_1\}$, we have $E_{\calQprime}^{\GW}= E_{\agw}$ and $E_{\calQprime}^{\textrm{RGW}}= E_{\argwprime}$, with $\agw$ given in \eqref{eq:GROW_betting_parameter} and
			\begin{equation}\label{eq:alphaexplicit}\argwprime = \frac{(B-\mu_0) - \frac{(\mu_0-A)(B-\mu_1)}{B-A} r}{(\mu_0-A)(B-\mu_0)\left(1+\frac{B-\mu_1}{B-A}r\right)}\,\in (0,\alphamax)\,, \quad\textrm{for } r = \left(\frac{\mu_1-A}{B-A}\right)^{\frac{\mu_1-A}{B-\mu_1}}\,.\end{equation}
			
			\item For $\calP = \{P\in\scrP\mid\E_P[X]=\mu_0\}$ vs.\ $\calQhat = \{Q\in\scrP\mid\E_Q[X]\neq\mu_0\}$, the GROW e-variable $\Egwhat \equiv1$ is trivial, while $\Ergwhat= E_{\argwhat}$, for 
			\begin{equation}\label{eq:def_rgw_parameter_for_two_sided}
				\argwhat = \frac{(A+B)/2-\mu_0}{(\mu_0-A)(B-\mu_0)}\in I_{\mu_0}\,.
			\end{equation}
			Moreover, for any $\delta>0$ such that $(\mu_0-\delta,\mu_0+\delta) \subseteq (A,B)$, for $\calP$ vs.\ $\calQhat_\delta = \{Q\in\scrP\mid |\E_Q[X]-\mu_0|>\delta\}$ we still have $\Egwhatdelta \equiv1$ and $\Ergwhatdelta= E_{\argwhat}$, independently of $\delta$.
		\end{enumerate}
	\end{theorem}
	\noindent\emph{Proof sketch for (a).}
	Our key step is to make the convex structure of the worst-case optimisation explicit. For any $f\in\scrF$, the function
	\begin{equation}\label{eq} f^c:\mathcal{X}\to\R;\quad \mu\mapsto f^c(\mu)=\inf_{{Q\in\scrP:\E_Q[X]=\mu}}\E_Q[f(X)]\,,\end{equation}
	is the largest convex function dominated by $f$, known as the \emph{convex envelope} of $f$, where the infimum is restricted to $Q$ for which $f$ is integrable. Combining this representation with admissibility of the coin-betting e-class gives
	$$\GROW(\calQ)=\sup_{\alpha\in I_{\mu_0}}\inf_{Q\in\calQ}\E_Q[\log(1+\alpha(X-\mu_0))]=\sup_{\alpha\in I_{\mu_0}}\inf_{Q\in\calQ}\E_Q[f_\alpha]=\sup_{\alpha\in I_{\mu_0}}f_\alpha^c(\mu_1)\,.$$
	By concavity of $f_\alpha$, it follows that $f_\alpha^c$ equals the linear interpolation of $f_\alpha(A)$ and $f_\alpha(B)$. That is, $f_\alpha^c(\mu_1) = \frac{B-\mu_1}{B-A}f_\alpha(A)+\frac{\mu_1-A}{B-A}f_\alpha(B)$, which is strictly concave in $\alpha$ with maximiser given by \eqref{eq:GROW_betting_parameter}.
	
	Since $\GRO(Q)=\sup_{\beta\in I_{\mu_0}}\E_Q[ f_\beta]$ for any $Q\in\calQ$, it follows similarly that 
	\begin{align}
		\REGROW(\calQ) &= \sup_{\alpha\in I_{\mu_0}}\inf_{Q\in\calQ}\inf_{\beta\in I_{\mu_0}} (\E_Q [f_\alpha]-\E_Q [f_\beta])= \sup_{\alpha\in I_{\mu_0}}\inf_{Q\in\calQ}\inf_{\beta\in I_{\mu_0}} \E_Q [\fab]\nonumber\\
		&=\sup_{\alpha\in I_{\mu_0}}\inf_{\beta\in I_{\mu_0}}\inf_{Q\in\calQ} \E_Q [\fab]=\sup_{\alpha\in I_{\mu_0}}\inf_{\beta\in I_{\mu_0}}\fabc(\mu_1)\,.
		\label{eq:proof_REGROW_123}
	\end{align}
	It remains to show that there exists a unique solution $\argw\in(\alpha^\star_{\GW}, \alphamax)$ to the maximin problem in~\eqref{eq:proof_REGROW_123}; see the Supplement for the details. The proofs for $(b)$ and $(c)$ follow similar ideas. \hfill$\square$
	
	\vspace{0.2cm}
	
	\begin{figure}[h]
		\centering
		\includegraphics[width =0.65\textwidth]{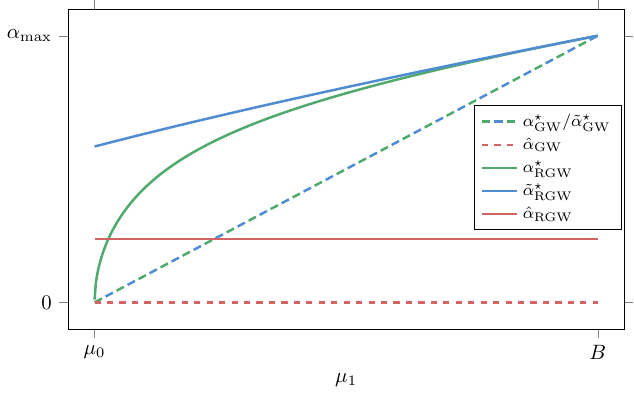}
		\caption{The optimal betting parameters from Theorem~\ref{thm:master_thm} as functions of $\mu_1$, corresponding to increasing separation between the null and the alternative from left to right. Notably, the optimal parameters in~(c) (red lines) are independent of the separation $\mu_1-\mu_0$.} 
		\label{fig:curves}
	\end{figure}

	Figure~\ref{fig:curves} illustrates the statement of Theorem~\ref{thm:master_thm}. In particular, for $\mu_1=\mu_0$ (no separation between the null and the alternative)  GROW yields a trivial solution, whereas REGROW remains non-trivial (i.e., the betting parameter is non-zero) for the one- and two-sided testing problems
	$\tilde{\calP}: \E(X)\leq \mu_0$ vs.\ $\tilde{\calQ}: \E(X)> \mu_0$, and ${\calP}: \E(X)=\mu_0$ vs.\ $\hat{\calQ}: \E(X)\neq \mu_0$. This is consistent with what was observed earlier for the Bernoulli example.
	Moreover, again as in the Bernoulli example, the optimal (RE)GROW e-variables for the two-sided problem are independent of the separation between the null and the alternative, with $\Ergwhat\not\equiv1$ whenever $\mu_0\neq(A+B)/2$.

	\begin{remark}The parameter $\argwprime$ at~\eqref{eq:alphaexplicit} is equivalently characterised as the unique solution to $ F_{\argwprime,\alphamax}(B)=G_{\argwprime,\agw}(\mu_1)$ with $\argwprime>\agw$.\end{remark}\begin{remark} The assumption $\mu_0< \mu_1$ in Theorem~\ref{thm:master_thm} is only made for notational convenience; see Theorem~\ref{thm:master_thm_reversed} in the Supplement for the symmetric version of Theorem~\ref{thm:master_thm} for
		$\mu_1< \mu_0$.
	\end{remark}

	From the proof of Theorem \ref{thm:master_thm} in the Supplement, we can identify the \emph{worst-case distributions} achieving the infimum over $\calQ$ in~\eqref{eq:def_GROW} and~\eqref{eq:def_REGROW}. For the point testing problem in Theorem~\ref{thm:master_thm}~(a), the infimum over $\calQ$ in GROW is achieved by $Q_1 = q\delta_A + (1-q)\delta_B$, with $q=(B-\mu_1)/(B-A)$, and $\Egw$ is the GRO e-variable for the simple alternative $\{Q_1\}$. This also admits a RIPr interpretation as in \citet{grunwald2024safe}: $Q_1$ minimises over $\calQ$ the reverse-KL distance to $\calP$, and $\Egw$ is the likelihood ratio between $Q_1$ and its projection $p\delta_A + (1-p)\delta_B$, with $p=(B-\mu_0)/(B-A)$. 
	
	For REGROW, the infimum is achieved simultaneously by $Q_1$ and $Q_2 = \delta_{\mu_1}$, which are the measures with highest and lowest variance in $\calQ$. As the worst-case distribution for GROW, $Q_1$ is the \emph{hardest} alternative to distinguish from $\calP$. On the other hand, the deterministic distribution $Q_2$  constitutes the \emph{easiest} alternative to test against $\calP$ and thus allows for the most powerful GRO e-variable $\Eg_{Q_2}=E_{\alphamax}$ over all $Q\in\calQ$:
	\begin{align*}
		\E_Q[\log\Eg_Q]&= \E_Q[\log(1+\alpha^\star_Q(X-\mu_0))]\leq \log \E_Q[1+\alpha^\star_Q(X-\mu_0)]\\
		&=\log [1+\alpha^\star_Q(\mu_1-\mu_0)]\leq \log [1+\alphamax(\mu_1-\mu_0)]=\E_{Q_2}[\log\Eg_{Q_2}]\,,
	\end{align*}
	where $\alpha^\star_Q$ denotes the GRO optimal betting parameter for the simple alternative $\{Q\}$. 
	Since REGROW evaluates performance relative to $\GRO(Q)$, alternatives with large $\GRO(Q)$ are also those for which a fixed e-variable may incur the largest relative loss, which is why $Q_2$ emerges as a worst-case distribution for REGROW despite being the easiest in absolute terms.
	
	For the one-sided testing problem in Theorem~\ref{thm:master_thm}~(b), the infimum in GROW is also achieved by $Q_1$ from above, although this time this is only a limiting distribution, in the sense that it is not contained in the alternative $\calQprime$. Again, for REGROW we have two worst-case distributions: one is $Q_1$, while the other is $\delta_B$. The reasoning that $\delta_B$ appears as a second worst-case candidate for REGROW is analogous to the argument above since it allows for the most powerful GRO e-variable amongst all measures in $\calQprime$. Finally, for the two-sided alternative in Theorem~\ref{thm:master_thm}~(c), we cannot speak of a specific worst-case distribution for GROW, since the GROW e-variable is trivial, whereas, for REGROW, the infimum is achieved simultaneously by $\delta_A$ and $\delta_B$. In contrast to the two other cases, both worst-case distributions depend only on the boundary of the support of the random variable and not on the specific choice of $\mu_0$, due to the convexity of the problem.
	
	We remark that in all cases the least favourable alternatives are supported on at most two points. 
	Indeed, for fixed parameters $\alpha$ and $\beta$, the map
	$
	Q \mapsto \E_Q[F_{\alpha,\beta}(X)]
	$
	is linear in $Q$. Hence, when minimising over a convex alternative, the infimum is attained (or approached) at an extreme point of the alternative. In our case, the alternatives are (unions) of the form $\{P\mid\E_P[X]=\mu\}$, and for each of these sets the extreme points are given precisely by the measures supported on at most two points (this is a direct consequence of Carathéodory's theorem in dimension $1$). For parts (b) and (c) of Theorem~\ref{thm:master_thm}, the fact that the worst-case distributions for REGROW are supported only on the extremal points $A$ and $B$ shows that they correspond exactly to those found in the introductory Bernoulli example when $A=0$ and $B=1$. 
	Notably, this is not the case for Theorem~\ref{thm:master_thm} (a), as there one of the worst-case distributions is not supported on $\{A,B\}$. 
	
	It is actually worth highlighting further structural similarities between Theorem~\ref{thm:master_thm} and the Bernoulli example. In all cases, the REGROW e-variable is obtained by equalising the relative expected growth under two worst-case distributions, corresponding to an increasing and a decreasing contribution, respectively. Moreover, as in the Bernoulli example, the resulting REGROW e-variables differ from the GRO e-variables for the respective worst-case distributions. Finally, we note that reductions of bounded random variables to Bernoulli variables are known in various other contexts; see, e.g.,~\citet[Lemma 3]{maurer2004a}.

	\section{Discussion}
	We have derived the (RE)GROW-optimal e-variables for bounded mean testing, a key example that does not fall under the AC assumption. Our proof combines the admissibility reduction to coin-betting e-variables with a convex-envelope reformulation of the  worst-case optimisation. For both criteria, we have identified the alternatives that are most difficult to test against and shown that they admit a meaningful interpretation. Remarkably, the interpretation of these worst-case distributions, as well as the reasoning underlying the derivation of the (RE)GROW optimal e-variables in this study, align with the much simpler introductory Bernoulli example. We hope that our findings shed light on (RE)GROW beyond our setting and contribute to the current discussion on how to find optimal e-variables for composite alternatives (\citealp{wang2024proposer}, \citealp{larsson2025the}). 
	
	It is natural to ask to what extent our results generalise beyond the ubiquitous problem of bounded mean testing. Importantly, the characterisation of worst-case expectations in terms of convex envelopes, as in~\eqref{eq:deffc}, directly extends to multivariate functions; see, e.g., \cite{rockafellar1970convex}. Combined with suitable finite-dimensional admissible e-classes, this provides a potential strategy for studying more general hypotheses defined by linear constraints, such as quantile testing or testing the mean under a finite-variance assumption~\citep{clerico2024optimal,larsson2025evariables}.
	For future work, it would be of interest to investigate whether, in concrete settings, it is possible to characterise the corresponding convex envelopes and to solve the associated (mini)max problems explicitly. For GROW, this is likely to be feasible, since the relevant convex envelope is linear due to the concavity of the objective function appearing in the worst-case expectation. For REGROW, an explicit characterisation of the corresponding convex envelope is probably  more challenging in general, and some of the monotonicity arguments we used  would need to be adapted to a  no longer totally ordered parameter space. Finally, we note that the convex-envelope reformulation itself is not specific to logarithmic utility and also applies to other increasing concave utilities, including the power utilities studied by~\citet{larsson2025the}.
	
	A limitation of our work is its restriction to single-round e-variables. Notably, the optimality of the coin-betting e-class directly extends to the multi-round game with $n$ outcomes $x_1,\ldots, x_n$, and optimal e-variables are of the form $\prod_{i=1}^n (1+\alpha_i(x_i-\mu_0))$; see \cite{clerico2025on}. However, for the multi-round setting optimal e-variables are parametrised by predictable betting strategies rather than by a single deterministic parameter, making the corresponding (RE)GROW problems substantially less tractable. A distinct line of work constructs adaptive strategies that do not prespecify an alternative and instead achieve asymptotic, instance-wise log-optimality. We discuss the relation between these approaches and (RE)GROW in  the Supplement. A natural direction for future work is to study exact or approximate (RE)GROW strategies in the multi-round setting, including their possible connection with universal-portfolio and mixture constructions.
	
	\section*{Acknowledgement}
	We thank Wouter Koolen, Peter Gr\"unwald, and Nick Koning for helpful discussions. We also thank the three anonymous referees, whose comments helped improve the quality of the paper.
	
	\bibliographystyle{plainnat}
	\bibliography{bib}
	
	\clearpage
	
	\begin{center}
		{\LARGE\bfseries Supplementary Material}
	\end{center}
	
	\appendix
	
	\section{Visual intuition for the Bernoulli example}
	\begin{figure}[H]
		\centering
		\includegraphics[width =0.65\textwidth]{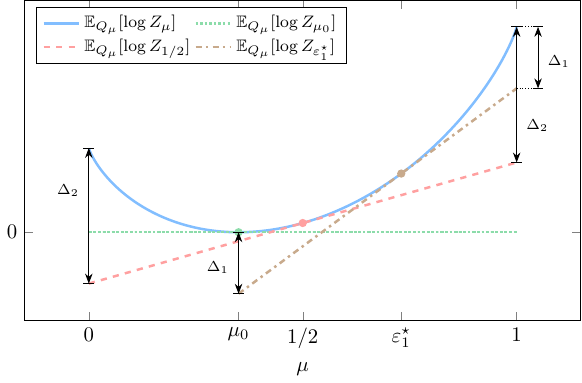}
		\caption{The solid blue curve shows the optimal growth rate $\GRO(Q_\mu)=\E_{Q_\mu}[\log Z_\mu]=\kl(\mu,\mu_0)$ as a function of $\mu$.
			For any fixed $\ee\in[0,1]$, the e-power $\E_{Q_\mu}[\log Z_\ee]$ is affine in $\mu$ and coincides with the tangent to the blue curve at $\mu=\ee$.
			The green dotted line corresponds to the trivial e-variable $Z_{\mu_0}=(1,1)^\top$, which has zero e-power for all $\mu$.
			It is the only tangent that is everywhere nonnegative, hence $Z_{\mu_0}$ is the GROW solution for both $\calQ_1$ and $\calQ_2$.
			For REGROW, the relevant quantity is the gap between $\mu\mapsto\GRO(Q_\mu)$ and its tangents.
			For $\calQ_1$, only $\mu\in(\mu_0,1]$ matter, and the optimal e-variable $Z_{\ee_1^\star}$ (brown line) equalises this gap at $\mu=\mu_0$ and $\mu=1$ ($\Delta_1$).
			For $\calQ_2$, the REGROW solution is $Z_{1/2}$ (red dashed line), for which the worst cases are $\mu=0$ and $\mu=1$, where the gap to $\GRO(Q_\mu)$ equals $\Delta_2$.
		} 
		\label{fig:example}
	\end{figure}
	
	\section{Admissibility of the coin-betting e-class}
	Recall that, for fixed $\mu_0\in(A,B)$, we defined the \emph{coin-betting e-variable} as
	\begin{equation*}
		E_\alpha\,:\,x\mapsto 1+\alpha (x-\mu_0)
	\end{equation*}
	for $I_{\mu_0}= [(\mu_0-B)^{-1},(\mu_0-A)^{-1}]$. 
	\cite{clerico2025on} shows that the coin-betting e-variables are the only admissible e-variables for $\calP = \{P\in\scrP \mid \E_P[X] = \mu_0\}$, in the sense that any other e-variable is pointwise dominated by a coin-betting e-variable. A similar result had  earlier appeared in \cite{wang2026ebacktesting} (cf.\ Lemma E.2 therein), while \cite{clerico2024optimal} and \cite{larsson2025evariables} established extensions to more general hypotheses generated by linear constraints. Since we repeatedly rely on the admissibility of the coin-betting class throughout the paper, we restate the main result of \citet{clerico2025on} here for the reader's convenience.
	\begin{proposition}[From Theorem 1 in \citealp{clerico2025on}]\label{prop:coin_betting_is_an_optimal_class}
		Let $\calP = \{P\in\scrP \mid \E_P[X] = \mu_0\}$. For any $E\in\calE(\calP)$, there exists $\alpha\in I_{\mu_0}$ such that $E_\alpha(x)\geq E(x)$, for all $x\in\calX.$
	\end{proposition}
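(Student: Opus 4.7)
The plan is to reduce the pointwise domination $E_\alpha \ge E$ to a one-dimensional constraint on the slope $\alpha$. Since $E_\alpha(x) - E(x) = \alpha(x - \mu_0) - (E(x) - 1)$, this inequality rewrites, for $x \neq \mu_0$, as a bound on the secant slope $\phi(x) := (E(x) - 1)/(x - \mu_0)$: one needs $\alpha \ge \phi(x)$ when $x > \mu_0$ and $\alpha \le \phi(x)$ when $x < \mu_0$. So the task reduces to showing that $\alpha_+ := \sup_{x > \mu_0} \phi(x) \le \inf_{x < \mu_0} \phi(x) =: \alpha_-$, and that this interval has non-empty intersection with $I_{\mu_0}$.

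For the first point, the key observation is that $\calP$ contains every two-point mixture $Q_{x_1, x_2} = \lambda\,\delta_{x_1} + (1-\lambda)\,\delta_{x_2}$ with $x_1 < \mu_0 < x_2$ and $\lambda = (x_2 - \mu_0)/(x_2 - x_1)$, since such a $Q$ has mean $\mu_0$. Applying the e-variable property to this family and rearranging yields the secant inequality $\phi(x_2) \le \phi(x_1)$ for all such pairs; equivalently, this follows from the characterisation \eqref{eq:characterization_convex_envelope} of \Cref{lemma:fc} applied to $-E$ at $\mu_0$, combined with \eqref{eq:deffc} and the bound $(-E)^c(\mu_0) \ge -\sup_{P \in \calP} \E_P[E] \ge -1$. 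Passing to sup/inf over the two variables then delivers $\alpha_+ \le \alpha_-$. At $x = \mu_0$ the inequality $E(\mu_0) \le 1 = E_\alpha(\mu_0)$ is automatic, because $\delta_{\mu_0} \in \calP$.

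For the second point, I would exploit $E \ge 0$ and the secant inequality at the endpoints of $\calX$: taking $x_1 = a$ gives $\alpha_+ \le \phi(a) = (1 - E(a))/(\mu_0 - a) \le 1/(\mu_0 - a) = \alphamax$, and symmetrically, taking $x_2 = b$ gives $\alpha_- \ge \phi(b) \ge \alphamin$. Combined with $\alpha_+ \le \alpha_-$ and $\alphamin \le \alphamax$, the intersection $[\max(\alpha_+, \alphamin), \min(\alpha_-, \alphamax)]$ is a non-empty subinterval of both $[\alpha_+, \alpha_-]$ and $I_{\mu_0}$, and any element of it supplies a coin-betting e-variable $E_\alpha$ dominating $E$. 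The only conceptual step is the secant inequality of the previous paragraph; the remaining estimates are routine inequality manipulations, so I do not anticipate any genuine obstacle.
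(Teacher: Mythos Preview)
The paper does not actually prove \Cref{prop:coin_betting_is_an_optimal_class}; it is imported as a corollary of Theorem~1 in the cited work \citep{Eugenio_JAR}, so there is no in-paper argument to compare against. Your proposal, by contrast, gives a complete and correct self-contained proof. The reduction to the secant function $\phi(x)=(E(x)-1)/(x-\mu_0)$ is exactly the right move: the two-point mixtures $Q_{x_1,x_2}\in\calP$ force $\phi(x_2)\le\phi(x_1)$ for $x_1<\mu_0<x_2$, yielding $\alpha_+\le\alpha_-$, and the nonnegativity of $E$ at the endpoints $a$ and $b$ pins the admissible slopes inside $I_{\mu_0}$. The atom $\delta_{\mu_0}\in\calP$ handles the point $x=\mu_0$. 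All four inequalities $\alpha_+\le\alpha_-$, $\alpha_+\le\alphamax$, $\alphamin\le\alpha_-$, $\alphamin\le\alphamax$ then guarantee that $[\max(\alpha_+,\alphamin),\min(\alpha_-,\alphamax)]$ is non-empty, and any $\alpha$ in it works.

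The aside invoking \Cref{lemma:fc} and $(-E)^c$ is a valid reformulation but not needed; your direct computation with $Q_{x_1,x_2}$ is cleaner and avoids any integrability technicality. In short: your argument is sound, elementary, and supplies what the paper only cites.
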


	For the proof of Theorem~\ref{thm:master_thm}~(b), we use the following auxiliary result on one-sided mean testing. 
	
	\begin{corollary}\label{cor:onesided}
		For $\mu_0\in(A,B)$, consider
		$\tilde{\calP} = \{P\in\scrP \mid\E_P[X]\leq\mu_0\}$.
		Then, for every $E\in\calE(\tilde{\calP})$, there exists $\alpha\in [0,\alphamax]$ such that $E_{\alpha}(x)\geq E(x)$, for all $x\in\calX$.
	\end{corollary}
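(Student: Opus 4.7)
The key observation is that the one-sided null $\calP = \{P\in\scrP \mid \E_P[X]\leq\mu_0\}$ strictly contains the equality null $\calP_0 = \{P\in\scrP \mid \E_P[X]=\mu_0\}$, so being an e-variable for $\calP$ is a stronger condition than being an e-variable for $\calP_0$, i.e.\ $\calE(\calP)\subseteq \calE(\calP_0)$. Hence, for any $E\in\calE(\calP)$, \Cref{prop:coin_betting_is_an_optimal_class} already provides some $\alpha\in I_{\mu_0}=[\alphamin,\alphamax]$ with $E_\alpha\geq E$ pointwise. The whole content of the corollary is thus to rule out the case $\alpha<0$.

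The plan is to show that whenever the dominating coin-betting parameter $\alpha$ is negative, the constant e-variable $E_0\equiv 1$ (which corresponds to taking $\alpha=0$) already dominates $E$ pointwise. Suppose $\alpha<0$. First I split $\calX$ at $\mu_0$. For $x\in[a,\mu_0]$, the Dirac measure $\delta_x$ belongs to $\calP$ (its mean is $x\leq\mu_0$), so the e-variable defining inequality gives $E(x)=\E_{\delta_x}[E]\leq 1$. For $x\in(\mu_0,b]$, I use instead the bound from \Cref{prop:coin_betting_is_an_optimal_class}, obtaining
\[
E(x)\leq E_\alpha(x) = 1+\alpha(x-\mu_0) < 1,
\]
since $\alpha<0$ and $x-\mu_0>0$. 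Combining the two regimes, $E(x)\leq 1 = E_0(x)$ for all $x\in\calX$, so one may simply replace $\alpha$ by $0\in[0,\alphamax]$ and the corollary is proven.

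The argument is genuinely short and there is no real obstacle: the only subtlety is noticing that the one-sided null contains \emph{all} Dirac measures below $\mu_0$, which provides the pointwise bound $E\leq 1$ on $[a,\mu_0]$ for free and allows us to discard negative coin-betting parameters. The rest is a routine reduction to \Cref{prop:coin_betting_is_an_optimal_class}.
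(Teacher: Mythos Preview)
Your proof is correct and follows essentially the same approach as the paper: reduce to \Cref{prop:coin_betting_is_an_optimal_class} via the inclusion $\calE(\calP)\subseteq\calE(\calP_{\mu_0})$, and then use the Dirac measures $\delta_x\in\calP$ for $x\in[a,\mu_0]$ to force $E\leq 1$ on that range, which (together with the bound $E\leq E_\alpha<1$ on $(\mu_0,b]$ when $\alpha<0$) shows that $E_0$ dominates $E$. Your presentation is in fact more streamlined than the paper's, which additionally proves the converse inclusion (that any $E\in\calE(\calP_{\mu_0})$ dominated by some $E_\alpha$ with $\alpha\geq0$ lies in $\calE(\calP)$) and phrases the main step as a contrapositive, but the substantive idea is identical.
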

	\begin{proof}
		For $\mu\in[A,B]$, let $\calP_\mu = \{P\in\scrP\mid\E_P[X]=\mu\}$. Then we have
		$$\tilde{\calP} = \{P\in\scrP\mid\E_P[X]\leq\mu_0\} = \bigcup_{A\leq\mu\leq\mu_0}\calP_\mu\,.$$
		As discussed in \cite{larsson2025evariables}, it is straightforward to check that the set of e-variables for a union of hypotheses is the intersection of the sets of e-variables for each hypothesis, namely
		$$\calE(\tilde{\calP}) = \bigcap_{A\leq\mu\leq\mu_0}\calE(\calP_\mu)\,.$$
		Clearly, $\calE(\tilde{\calP})\subseteq \calE(\calP_{\mu_0})$. From Proposition \ref{prop:coin_betting_is_an_optimal_class}, it is easy to check that, if $\alpha\in[0,\alphamax]$, then $E_\alpha^\mu\,:\,x\mapsto 1+\alpha(x-\mu)$ is an e-variable for $\calP_\mu$, for every $\mu\in[A,\mu_0]$. (Note that although $\mu=A$ is not explicitly covered by Proposition \ref{prop:coin_betting_is_an_optimal_class}, since $\calP_A = \{\delta_A\}$, we immediately see that any measurable function $E$ such that $E(A)\leq 1$ is in $\calE(\calP_A)$.) Moreover, in such case we also have that $E_\alpha^\mu\geq E_\alpha^{\mu_0}$. In particular, if $E\in\calE(\calP_{\mu_0})$ is dominated by $E_\alpha^{\mu_0}$, for $\alpha\in[0,\alphamax]$, we have that $E\in\calE(\tilde{\calP})$.  On the other hand, if $E\in\calE(\calP_{\mu_0})$ is not dominated by $E_\alpha^{\mu_0}$ for any $\alpha\geq 0$, then there is $x\in[A,\mu_0]$ such that $E(x)>1$. Since $\delta_x\in\tilde{\calP}$, we have that $E\notin\calE(\tilde{\calP})$, and so we conclude.
	\end{proof}

	\section{C-envelopes}\label{app:convex_envelope}
	Given $f\in\scrF$, we define the \emph{c-envelope of $f$} as the mapping $f^c:\calX\to[-\infty,\infty)$ given by
	\begin{equation}\label{eq:deffc}   f^c(x) = \inf_{{Q\in\scrP\,:\E_Q[X]=x}}\E_Q[f(X)]\,,\qquad x\in \calX\,,\end{equation}
	with the infimum restricted to those $Q$ for which $f$ is integrable. 
	\begin{lemma}\label{lemma:fc}
		Let $f\in\scrF$ and  $f^c$ be its c-envelope \eqref{eq:deffc}. Then, $f^c$ is dominated by $f$, that is $f^c\leq f$ pointwise. If $f$ is non-increasing (non-decreasing), then $f^c$ is non-increasing (non-decreasing). If $f$ is convex, then $f^c= f$, while if $f$ is concave we have that $f^c(x) = \frac{B-x}{B-A}f(A)+\frac{x-A}{B-A} f(B)$ for all $x$, with the convention $0/0=1$. Moreover, again with the convention $0/0=1$, we have the alternative characterisation
		\begin{equation}\label{eq:characterization_convex_envelope}
			f^c(x)=   \inf_{\substack{x_1,x_2 \in \calX \\ x_1\leq x\leq x_2}} \left\{\frac{x_2-x}{x_2-x_1}\,f(x_1)+\left(1-\frac{x_2-x}{x_2-x_1}\right)f(x_2)\right\}\,.
		\end{equation}
	\end{lemma}
	Readers familiar with convex analysis will recognise that $f^c$ is the \emph{convex envelope} of $f$, namely its \emph{greatest convex minorant}. Both characterisations~\eqref{eq:deffc} and~\eqref{eq:characterization_convex_envelope}, as well as the other properties stated in the lemma, are standard; see, e.g., Chapter~12 of~\citet{rockafellar1970convex}. Yet, our analysis relies only on the facts explicitly stated in Lemma~\ref{lemma:fc}. For this reason, we avoid invoking additional tools or notions from convex analysis, and give a self-contained proof of Lemma~\ref{lemma:fc}.
	
	\begin{proof}
		
		The fact that $f^c$ is dominated by $f$ is trivial since $\delta_x\in\scrP$ has mean $x$.
		
		Assume that $f$ is non-decreasing (the non-increasing case is analogous). Fix $x_1<x_2$ in $\calX=[A,B]$, and let $\lambda = (x_1-A)/(x_2-A)\in[0,1)$. Fix any $Q_2\in\scrP$ such that $\E_{Q_2}[X] = x_2$, and define $Q_1 = \lambda Q_2 + (1-\lambda)\delta_A$. Then $Q_1\in\scrP$ and $\E_{Q_1}[X] = x_1$. We have that $\E_{Q_1}[f] = \lambda \E_{Q_2}[f] + (1-\lambda)f(A)\leq \E_{Q_2}[f]$, since $f(A) = \min f \leq \E_{Q_2}[f]$ because $f$ is non-decreasing. We conclude that $f^c(x_1)\leq f^c(x_2)$. 
		
		If $f$ is convex, then by Jensen's inequality we have $f^c\geq f$, and so $f^c\equiv f$. If $f$ is concave, let $\ell\,:\,x\mapsto \frac{B-x}{B-A}f(A)+(1-\frac{B-x}{B-A}) f(B)$. Then $f\geq \ell$ by concavity, and $\ell(x)\geq f^c(x)$ for all $x$, since it is realised by the measure $\frac{B-x}{B-A}\delta_A + (1-\frac{B-x}{B-A})\delta_B$. For any measure $Q$ with mean $x$, by linearity of $\ell$ we have $\E_Q[f]\geq\E_Q[\ell] = \ell(\E_Q[X]) = \ell(x)$, so $\ell\equiv f^c$. 
		
		We are left with the final characterisation \eqref{eq:characterization_convex_envelope}. The fact that the RHS is larger than the LHS is obvious, so only the other direction needs to be shown. Fix $x\in\calX$. If $f^c(x) = f(x)$ then this is achieved by \eqref{eq:characterization_convex_envelope} with $x_1=x_2=x$. So let us assume that $f^c(x)<f(x)$. Note that in such case $x$ must be in the interior of $\calX$. We can find a measure $Q\in\scrP$, with mean $x$, such that $\E_Q[f]<f(x)$. To conclude, it is enough to show that there are $y,z\in\calX$, with $y<x<z$, such that $\E_{\theta_{y,z}}[f]\leq \E_Q[f]$, where $\theta_{y,z}$ is the only probability measure with mean $x$ supported on $\{y,z\}$.  Without loss of generality, we can assume that $Q$ puts no mass on $x$. (If $Q(\{x\})>0$, write $Q=\lambda Q' + (1-\lambda)\delta_x$ with $\lambda\in(0,1)$ and $Q'(\{x\})=0$. Then $\E_{Q'}[X]=x$ and $\E_{Q'}[f]< \E_Q[f]$, so we may replace $Q$ by $Q'$.) Now, we have that
		\begin{align*}\int_{z> x}&\int_{y<x}(z-y)\big(\E_{\theta_{y,z}}[f]-\E_Q[f]\big) \dd Q(y)\dd Q(z) \\&= \int_{z>x}\int_{y<x}\big((z-x)(f(y)-\E_Q[f]) + (x-y)(f(z)-\E_Q[f])\big)\dd Q(y)\dd Q(z)\\&=\left(\int_{z>x} (z-x)\dd Q(z)\right)\left(\int_{y<x}(f(y)-\E_Q[f])\dd Q(y) + \int_{z> x}(f(z)-\E_Q[f])\dd Q(z)\right) \\&= \left(\int_{z>x} (z-x)\dd Q(z)\right)(\E_Q[f]-\E_Q[f])=0\,,\end{align*}
		where we used that $Q$ has no atom on $x$ and that $\int_{z> x} (z-x)\dd Q(z) = \int_{y< x} (x-y)\dd Q(y)$, because $x$ is the mean of $Q$. Since we are integrating over a domain where $z-y>0$, this integral being null implies that we cannot always have $\E_{\theta_{y,z}}[f]-\E_Q[f]>0$ for every $y$ and $z$. So, we conclude. To be fully rigorous, one should observe that the double integral manipulation only makes sense if $\E_Q[|f|]<+\infty$, which excludes the cases where $f$ is integrable but $\E_Q[f]=+\infty$ or $\E_Q[f]=-\infty$. The first case is trivial, as any allowed $\theta_{y,z}$ has finite expectation. For the case $-\infty$, we can define $f_n = \max{f,-n}$. We can then apply the previous result for each of these and find a sequence of two-point measures $\theta_n$ such that $\E_{\theta_n}[f_n]\leq \E_Q[f_n]$. Then we get
		\begin{equation*}
			\inf_{n}\E_{\theta_n}[f]\leq \inf_{n}\E_{\theta_n}[f_n]\leq \inf_{n}\E_{Q}[f_n] = -\infty\,,
		\end{equation*}
		which is enough to show that the RHS of \eqref{eq:characterization_convex_envelope} is $-\infty$, and hence conclude.
	\end{proof}

	\section{Proof of Theorem 
		\ref{thm:master_thm}}
	For $\alpha,\beta\in I_{\mu_0}$, we recall the following functions from $[A,B]$ to $[-\infty,\infty)$:
	\begin{align*}
		f_\alpha(x) &= \log\big(1+\alpha(x-\mu_0)\big)\,;\\
		\fab(x) &= \log\left(\frac{1+\alpha(x-\mu_0)}{1+\beta(x-\mu_0)}\right)\,;\\
		\gab(x) & = \frac{B-x}{B-A}\,F_{\alpha,\beta}(A) + \left(1-\frac{B-x}{B-A}\right)F_{\alpha,\beta}(B)\,,
	\end{align*}
	with the conventions $\log0=-\infty$ and $0/0=1$. The following facts are easy to verify: $f_\alpha$ is strictly concave for every $\alpha\neq 0$ (it is simply the constant $0$ if $\alpha=0$); $\fab$ is strictly increasing if $\alpha>\beta$ and strictly decreasing if $\beta>\alpha$ (again, the case $\alpha=\beta$ is trivial, as $\fab\equiv 0$); and $\fab$ 
	is concave for $0\le\beta\le\alpha$ and $\alpha\le\beta\le0$, and convex for $0\le\alpha\le\beta$ and $\beta\le\alpha\le0$. The monotonicity of $\gab$ clearly follows from the one of $\fab$.
	
	\begin{proof}[of Theorem~\ref{thm:master_thm}~(a)]
		It is easily checked that $\agw\in I_{\mu_0}$. Proposition \ref{prop:coin_betting_is_an_optimal_class} implies that $ \Egw$ can only be of the form $x\mapsto 1+\alpha^\star(x-\mu_0)$, for some $\alpha^\star\in I_{\mu_0}$. We need to check that this $\alpha^\star$ is precisely $\agw$ given at \eqref{eq:GROW_betting_parameter}. Recalling the definition \eqref{eq:deffc} of the c-envelope, by Proposition \ref{prop:coin_betting_is_an_optimal_class}, we have
		$$
		\GROW(\calQ) = \sup_{\alpha\in I_{\mu_0}}\inf_{Q\in\calQ} \E_Q [f_\alpha] = \sup_{\alpha\in I_{\mu_0}}f_{\alpha}^c(\mu_1)\,. 
		$$
		By Lemma \ref{lemma:fc}, the concavity of $f_\alpha$ implies that $f_\alpha^c(\mu_1) = \frac{B-\mu_1}{B-A}f_\alpha(A)+\frac{\mu_1-A}{B-A}f_\alpha(B)$. This is a strictly concave differentiable function of $\alpha$, whose maximiser is given by \eqref{eq:GROW_betting_parameter} (this can be easily derived by setting the derivative equal to zero).
		
		For the existence of a unique $\argw\in(\alpha^\star_{\GW}, \alphamax)$ satisfying $F_{\argw,\alphamax}(\mu_1)=G_{\argw,\agw}(\mu_1)$, it is enough to note that $F_{\alpha^\star_{\GW},\alphamax}(\mu_1)<G_{\alpha^\star_{\GW},\alpha^\star_{\GW}}(\mu_1)$, $F_{\alphamax,\alphamax}(\mu_1)>G_{\alphamax,\alpha^\star_{\GW}}(\mu_1)$, $\alpha \mapsto F_{\alpha,\alphamax}(\mu_1)$ is continuous and strictly increasing on $I_{\mu_0}$, and $\alpha \mapsto G_{\alpha,\alpha^\star_{\GW}}(\mu_1)$ is continuous and strictly decreasing for $\alpha > \alpha^\star_{\GW}$.
		
		Again by Proposition \ref{prop:coin_betting_is_an_optimal_class}, for every $Q\in\calQ$, $\GRO(Q)=\sup_{\beta\in I_{\mu_0}}\E_Q[ \log(1+\beta (X-\mu_0))]$. By definition \eqref{eq:deffc} of the c-envelope,
		\begin{align}
			\REGROW(\calQ) &= \sup_{\alpha\in I_{\mu_0}}\inf_{Q\in\calQ}\inf_{\beta\in I_{\mu_0}} \E_Q [\fab]\nonumber\\
			&=\sup_{\alpha\in I_{\mu_0}}\inf_{\beta\in I_{\mu_0}}\inf_{Q\in\calQ} \E_Q [\fab]\nonumber\\
			&=\sup_{\alpha\in I_{\mu_0}}\inf_{\beta\in I_{\mu_0}}\fabc(\mu_1)\,.
			\label{eq:proof_REGROW_1}
		\end{align}
		
		Define  $\varphi: I_{\mu_0}\to\R$ as $\varphi(\alpha) = \inf_{\beta\in I_{\mu_0}}\fabc(\mu_1)$. For $\alpha<\argw$,
		$$
		\phi(\alpha) \leq F_{\alpha,\alphamax}^c(\mu_1) \leq F_{\alpha,\alphamax}(\mu_1) \leq F_{\argw,\alphamax}(\mu_1)\,,
		$$
		where we used Lemma \ref{lemma:fc} and the monotonicity of $\alpha \mapsto F_{\alpha,\alphamax}(\mu_1)$ for the second and third inequality. For $\alpha>\beta>0$, $\fab$ is concave and $\fabc\equiv \gab$ by Lemma \ref{lemma:fc}. Hence, for $\alpha>\argw$,
		$$
		\phi(\alpha) \leq F_{\alpha,\alpha_{\GW}^\star}^c(\mu_1) = G_{\alpha, \alpha_{\GW}^\star}(\mu_1) \leq G_{\argw, \alpha_{\GW}^\star}(\mu_1) =F_{\argw,\alphamax}(\mu_1)\,,
		$$
		where we used the monotonicity of $\alpha\mapsto G_{\alpha,\agw}(\mu_1)$ in the second inequality. Therefore, $F_{\argw,\alphamax}(\mu_1)$ globally upper bounds $\phi$. To conclude, it remains to show that this upper bound is actually achieved by $\argw$, namely
		\begin{equation}\label{eq:proof_REGROW_eq3}
			\phi(\argw) = \inf_{\beta\in I_{\mu_0}}F_{\argw, \beta}^c(\mu_1)= F_{\argw,\alphamax}(\mu_1)\,.
		\end{equation}
		We proceed by case distinction. First, consider  $\beta\geq\argw$. Then, $F_{\argw, \beta}$ is convex, which by Lemma \ref{lemma:fc} implies $ F_{\argw, \beta}\equiv F_{\argw, \beta}^c$. Therefore,
		$$
		\inf_{\beta\geq\argw}F_{\argw, \beta}^c(\mu_1) = \inf_{\beta\geq\argw}F_{\argw, \beta}(\mu_1)= F_{\argw,\alphamax}(\mu_1)\,,
		$$
		by monotonicity of $\beta \mapsto F_{\argw,\beta}(\mu_1)$.
		
		On the other hand, for $\beta\in(0,\argw)$, $ F_{\argw, \beta}$ is concave. So, $ F_{\argw, \beta}^c\equiv G_{\argw, \beta}$ by Lemma \ref{lemma:fc} and
		$$
		\inf_{\beta\in (0,\argw)}F_{\argw, \beta}^c(\mu_1) = \inf_{\beta\in (0,\argw)}G_{\argw, \beta}(\mu_1)= G_{\argw, \alpha^\star_{\GW}}(\mu_1)= F_{\argw,\alphamax}(\mu_1)\,,
		$$
		where we used that $\beta \mapsto G_{\argw,\beta}(\mu_1)$ has a unique minimiser in $I_{\mu_0}$ at $\beta=\agw\in(0,\argw)$.
		
		To conclude the proof, it remains to consider the case $\beta \leq 0$. We will show that $\beta \mapsto F_{\argw, \beta}^c(\mu_1)$ is decreasing for $\beta\leq 0$.  By Lemma \ref{lemma:fc}, we have
		\begin{equation*}
			F_{\argw, \beta}^c(\mu_1) = \inf_{A\leq x_1 \leq \mu_1 \leq x_2\leq B} \big\{\theta{(x_1,x_2)}\, F_{\argw, \beta}(x_1)+(1-\theta(x_1,x_2))\, F_{\argw, \beta}(x_2)\big\}\,,
		\end{equation*}
		with $\theta{(x_1,x_2)}={(x_2-\mu_1)}/{(x_2-x_1)}$. For any fixed $x_1 \leq \mu_1 \leq x_2$, define $L:[\alphamin,0]\to\R$ via 
		\begin{equation*}
			L(\beta)=\theta{(x_1,x_2)}\, F_{\argw, \beta}(x_1)+(1-\theta{(x_1,x_2)})\, F_{\argw, \beta}(x_2) \,.
		\end{equation*}
		$L$ is differentiable, with first derivative given by
		$$
		L'(\beta) = -\frac{x_2-\mu_1}{x_2-x_1} \cdot\frac{x_1-\mu_0}{1+\beta (x_1-\mu_0)}-\frac{\mu_1-x_1}{x_2-x_1} \cdot\frac{x_2-\mu_0}{1+\beta (x_2-\mu_0)}\,. 
		$$
		If $x_1\geq \mu_0,$ then $L'(\beta)$ is clearly negative. The same holds true even for $x_1\in[A, \mu_0)$, as one can quickly check that
		\begin{equation*}
			\frac{\mu_1-x_1}{\mu_0-x_1}\geq 1 \geq \frac{x_2-\mu_1}{x_2-\mu_0}\cdot \frac{1+\beta (x_2-\mu_0)}{1+\beta (x_1-\mu_0)}\,,
		\end{equation*}
		(as both fractions on the right hand side are strictly smaller than $1$ for $x_1< \mu_0$ and $\beta<0$), which by simple algebraic manipulation is equivalent to $L'(\beta)\leq 0$. We conclude that $\beta \mapsto F_{\argw, \beta}^c(\mu_1)$ is decreasing for $\beta \leq 0$, as it is the infimum of a family of decreasing functions.  The conclusion follows.
	\end{proof}

	\begin{figure}[t]
		\centering
		\includegraphics[width =0.5\textwidth]{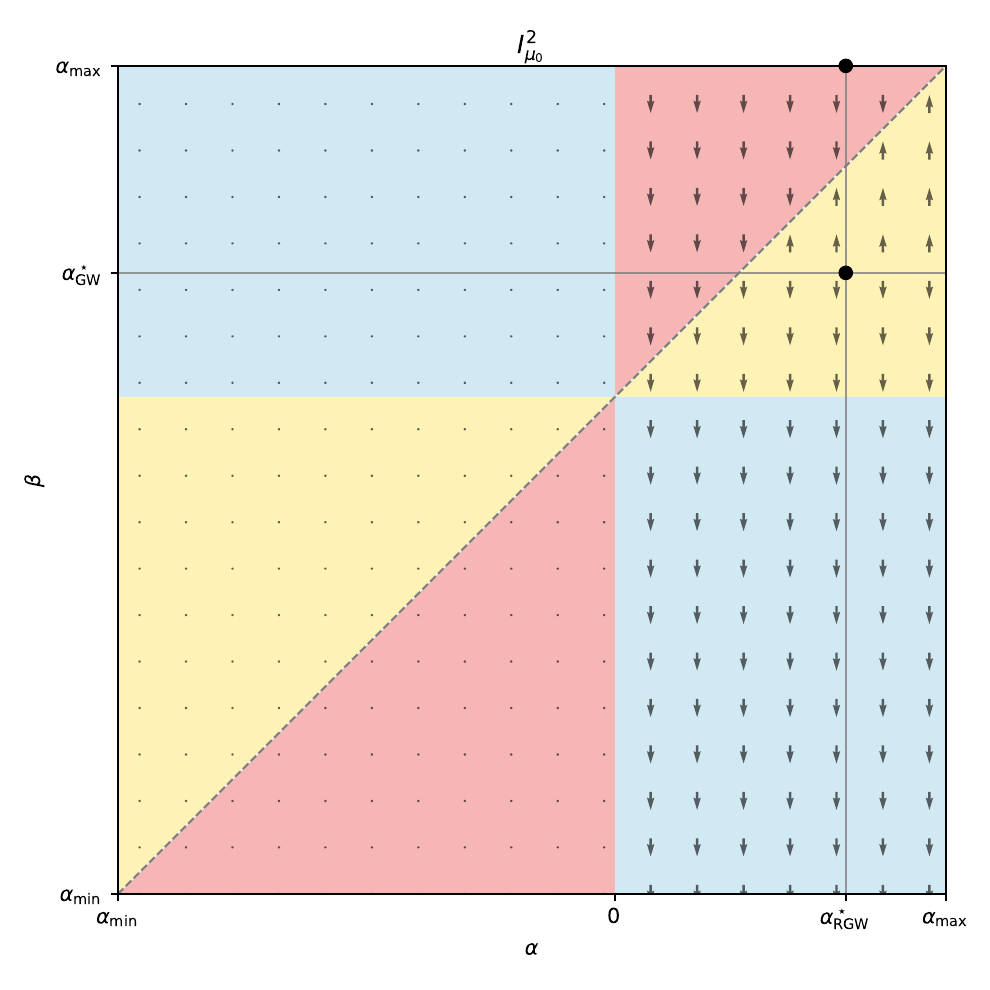}
		\caption{The parameter space $I_{\mu_0}^2$ 
			with the regions where $\fab$ is convex (red) or concave (yellow), for $\mu_1>\mu_0$. By Lemma \ref{lemma:fc}, $\fabc\equiv\fab$ in the red area and $\fabc\equiv\gab$ in the yellow area. 
			For $\alpha>0$, the directional derivatives of the function $\beta\mapsto \fabc(\mu_1)$ are represented as  arrows. Note that $\beta\mapsto F^c_{\argw,\beta}(\mu_1)$ has two local minima, achieved at the two black dots in the figure. Formally proving this fact is the main step to show \eqref{eq:proof_REGROW_eq3}. In particular, the two black dots are the two unique points simultaneously achieving the sup-inf in \eqref{eq:proof_REGROW_1}.} 
		\label{fig:Regions_F}
	\end{figure}

	Figure \ref{fig:Regions_F} gives a graphical illustration of the proof of Theorem \ref{thm:master_thm} (a). 
	\begin{proof}[of Theorem~\ref{thm:master_thm}~(b)]
		By Corollary \ref{cor:onesided} and Lemma \ref{lemma:fc},
		\begin{equation}\label{eq:proof_GROW_eq1_one-sided}
			\GROW(\calQprime) = \sup_{0\leq\alpha\leq\alphamax}\inf_{Q\in\calQprime} \E_Q[ \log(1+\alpha (X-\mu_0))] = \sup_{0\leq\alpha\leq\alphamax} \inf_{B\geq\mu > \mu_1} f_{\alpha}^c(\mu)\,.
		\end{equation}
		In the proof of Theorem~\ref{thm:master_thm}~(a), it is shown that, for any $\mu \in [\mu_1,B]$, the function $\alpha \mapsto f_\alpha^c(\mu)$ is strictly concave with unique positive maximiser $\frac{(\mu-\mu_0)}{(\mu_0-A)(B-\mu_0)} \in [0,\alphamax]$. 
		For any $\alpha\geq 0$, the function $\mu \mapsto f_{\alpha}^c(\mu)$ is non-decreasing. By continuity  the infimum in \eqref{eq:proof_GROW_eq1_one-sided} equals the limiting value at $\mu=\mu_1$.
		
		We now define $\argwprime$ as the unique element of $(\agw,\alphamax)$ satisfying $F_{\argwprime,\alphamax}(B)=G_{\argwprime,\agw}(\mu_1)$. Existence and uniqueness follow from the facts that $F_{\alpha^\star_{\GW},\alphamax}(B)<G_{\alpha^\star_{\GW},\alpha^\star_{\GW}}(\mu_1)$, $F_{\alphamax,\alphamax}(B)>G_{\alphamax,\alpha^\star_{\GW}}(\mu_1)$, $\alpha\mapsto F_{\alpha,\alphamax}(B)$ is continuous and strictly increasing, and $\alpha\mapsto G_{\alpha,\alpha^\star_{\GW}}(\mu_1)$ is continuous and strictly decreasing for $\alpha>\alpha^\star_{\GW}$. That this solution is given by~\eqref{eq:alphaexplicit} can be checked by direct calculation.

		It follows analogously to the proof of part (a) that 
		\begin{align}
			\REGROW(\calQprime) =\sup_{\alpha\in I_{\mu_0}^+}\inf_{\beta\in I_{\mu_0}^+}\inf_{\mu > \mu_1}\fabc(\mu)
			\label{eq:proof_REGROW_1-one-sided}
		\end{align}
		where $I_{\mu_0}^+=[0,\alphamax]$, and where \eqref{eq:proof_REGROW_1-one-sided} differs from \eqref{eq:proof_REGROW_1} only by the additional infimum over $\mu\geq \mu_1$ (noting that taking the infimum over $\mu\geq \mu_1$ and $\mu>\mu_1$ is equivalent by continuity). 
		
		For
		\begin{equation*}
			\varphi: I_{\mu_0}^+\to \bar{\R}; \quad \alpha \mapsto \inf_{\beta\in I_{\mu_0}^+}\inf_{\mu \geq \mu_1}\fabc(\mu),
		\end{equation*}
		it follows analogously to the proof of part (a) that $F_{\argwprime,\alphamax}(B)$ globally upper bounds $\phi$, and it remains to show that this upper bound is actually achieved by $\argwprime$, that is
		\begin{equation}\label{eq:proof_REGROW_eq3-one-sided}
			\phi(\argwprime) = \inf_{\beta\in I_{\mu_0}^+}\inf_{\mu \geq \mu_1}F_{\argwprime, \beta}^c(\mu)= F_{\argwprime,\alphamax}(B)
		\end{equation}
		We show \eqref{eq:proof_REGROW_eq3-one-sided} by case distinction. First, consider  $\beta\in[\argwprime,\alphamax]$. In this case, $F_{\argwprime, \beta}$ is convex and decreasing, and thus
		\begin{equation}\label{eq:proof_REGROW_eq4-one-sided}
			\inf_{\beta\geq\argwprime}\inf_{ \mu \geq \mu_1}F_{\argwprime, \beta}^c(\mu) = \inf_{\beta\geq\argwprime}\inf_{\mu \geq \mu_1}F_{\argwprime, \beta}(\mu)= F_{\argwprime,\alphamax}(B),
		\end{equation}
		by monotonicity of $\beta \mapsto F_{\argwprime,\beta}(\mu)$, and $\mu \mapsto F_{\argwprime,\beta}(\mu)$. Similarly, if $\beta\in[0,\argwprime)$, it holds that $x\mapsto F_{\argwprime, \beta}(x)$ is concave and increasing, and thus
		\begin{align}\label{eq:proof_REGROW_eq5-one-sided}
			\begin{split}
				\inf_{\beta\in [0,\argwprime)}\inf_{ \mu \geq \mu_1}F_{\argwprime, \beta}^c(\mu) &= \inf_{\beta\in [0,\argwprime)}\inf_{ \mu \geq \mu_1}G_{\argwprime, \beta}(\mu)\\
				&=\inf_{\beta\in [0,\argwprime)} G_{\argwprime, \beta}(\mu_1)
				=G_{\argwprime, \agw}(\mu_1)= F_{\argwprime,\alphamax}(B),
			\end{split}
		\end{align}
		since for any $\beta\in [0,\argwprime)$, $\mu\mapsto G_{\argwprime,\beta}(\mu)$ is increasing for $\mu\geq \mu_1$, and since $\beta \mapsto G_{\argwprime,\beta}(\mu_1)$, $\beta\in [0,\argwprime)$, has a unique minimum at $\beta=\alpha^\star_{\GW}$. 
		\eqref{eq:proof_REGROW_eq4-one-sided} and \eqref{eq:proof_REGROW_eq5-one-sided} together show \eqref{eq:proof_REGROW_eq3-one-sided}, which concludes the proof. 
	\end{proof}

	\begin{proof}[of Theorem~\ref{thm:master_thm}~(c)]
		Recalling the definition \eqref{eq:deffc} of c-envelope, by Proposition \ref{prop:coin_betting_is_an_optimal_class} $\GROW(\calQhat) = \sup_{\alpha\in I_{\mu_0}}\inf_{\mu\neq\mu_0}f_\alpha^c(\mu)$. For any $\alpha$, by Lemma \ref{lemma:fc} $f^c_\alpha$ is affine since $f_\alpha$ is concave, and so $\inf_{\mu\neq\mu_0}f_\alpha^c(\mu) = \min\{f_\alpha(A), f_\alpha(B)\}\leq 0$. For $\alpha=0$ we have $f_0^c\equiv f_0\equiv 0$, so $\GROW(\calQhat) = 0$, which is achieved by $\Egwhat\equiv 1$. 
		
		As for REGROW, again by Proposition \ref{prop:coin_betting_is_an_optimal_class} we can write
		$$\REGROW(\calQhat)=\sup_{\alpha\in I_{\mu_0}}\inf_{\beta\in I_{\mu_0}}\inf_{\mu\neq\mu_0} F_{\alpha,\beta}^c(\mu)\,.$$
		Recalling that $F_{\alpha,\beta}$ is non-decreasing for $\alpha\geq\beta$, by Lemma \ref{lemma:fc} we have that $\inf_{\mu\neq\mu_0}F^c_{\alpha,\beta}(\mu) = F^c_{\alpha,\beta}(A) = F_{\alpha,\beta}(A)$, whenever $\beta\leq\alpha$ (where we used in the last equality that $\delta_A$ is the only measure in $\scrP$ with mean $A$). Using that $\beta\mapsto F_{\alpha,\beta}(A)$ is non-decreasing, we get that $\inf_{\beta\leq\alpha}\inf_{\mu\neq\mu_0}F^c_{\alpha,\beta}(\mu) = F_{\alpha,\alphamin}(A)$. With an analogous argument for the case $\beta>\alpha$, we obtain that  \begin{equation}\label{eq:supmin}\REGROW(\calQhat) = \sup_{\alpha\in I_{\mu_0}}\min\{F_{\alpha,\alphamin}(A),F_{\alpha,\alphamax}(B)\}\,.\end{equation}
		Writing everything explicitly, we have
		$$F_{\alpha,\alphamax}(B)= \log\left(\frac{\mu_0-A}{B-A}\big(1+\alpha(B-\mu_0)\big)\right)\quad\text{ and }\quad F_{\alpha,\alphamin}(A) = \log\left(\frac{B-\mu_0}{B-A}\big(1+\alpha(A-\mu_0)\big)\right)\,.$$
		Both these expressions are continuous functions of $\alpha$ in $I_{\mu_0}$, $\alpha\mapsto F_{\alpha,\alphamax}(B)$ is strictly increasing and tends to $-\infty$ for $\alpha\to\alphamin$, while $\alpha\mapsto F_{\alpha,\alphamin}(A)$ is strictly decreasing and tends to $-\infty$ for $\alpha\to\alphamax$. It follows that the supremum in \eqref{eq:supmin} is achieved by the unique $\argwhat\in I_{\mu_0}$ satisfying $F_{\argwhat,\alphamax}(B) = F_{\argwhat,\alphamin}(A)$, which is precisely given by \eqref{eq:def_rgw_parameter_for_two_sided}.
		
		Finally, for any $\delta>0$ with $(\mu_0-\delta,\mu_0+\delta)\subseteq (A,B)$, the same reasoning as above yields
		\begin{equation*}
			\GROW(\calQhat_\delta) = \sup_{\alpha\in I_{\mu_0}}\inf_{\mu: |\mu-\mu_0|>\delta}f_\alpha^c(\mu)=\sup_{\alpha\in I_{\mu_0}}\min\{f_\alpha(A), f_\alpha(B)\}=0=\GROW(\calQhat)\,,
		\end{equation*}
		and
		\begin{align*}
			\REGROW(\calQhat_\delta)&=\sup_{\alpha\in I_{\mu_0}}\inf_{\beta\in I_{\mu_0}}\inf_{\mu: |\mu-\mu_0|>\delta} F_{\alpha,\beta}^c(\mu)\\
			&= \sup_{\alpha\in I_{\mu_0}}\min\{F_{\alpha,\alphamin}(A),F_{\alpha,\alphamax}(B)\}=\REGROW(\calQhat) \,,
		\end{align*}
		and thus we obtain the same (RE)GROW e-variables for $\hat{\calQ}_\delta$, independently of $\delta$.
	\end{proof}

	\section{Theorem \ref{thm:master_thm} for $\mu_1<\mu_0$}
	\begin{theorem}\label{thm:master_thm_reversed}
		Let $\mu_0,\mu_1\in (A,B)$ with $\mu_1<\mu_0$.
		\begin{enumerate}[(a)]
			\item 
			For $\calP = \{P\in\scrP\mid\E_P[X]=\mu_0\}$ and $\calQ = \{Q\in\scrP\mid\E_Q[X]=\mu_1\}$, the (RE)GROW e-variables exist and equal $\Egw\equiv E_{\agw}$ and $\Ergw\equiv E_{\argw}$, for 
			\begin{equation}\label{eq:GROW_betting_parameter_reversed}\agw = \frac{\mu_1-\mu_0}{(\mu_0-A)(B-\mu_0)}\,\in (\alphamin,0),\end{equation}
			and $\argw \in (\alphamin,0)$ equals the unique value with $\argw <\alpha_{\GW}^\star$, such that 
			\begin{equation*}F_{\argw,\alphamin}(\mu_1)=G_{\argw,\alpha_{\GW}^\star}(\mu_1)\,.\end{equation*}
			\item  For $\calPprime = \{P\in\scrP\mid\E_P[X]\geq\mu_0\}$ and $\tilde{\calQ} = \{Q\in\scrP\mid\E_Q[X]<\mu_1\}$, the (RE)GROW e-variables exist and equal $E_{\calQprime}^{\textrm{GW}}\equiv E_{\agw}$ and $E_{\calQprime}^{\textrm{RGW}}\equiv E_{\argwprime}$, with $\agw$ given at \eqref{eq:GROW_betting_parameter_reversed} and $\argwprime$ equals the unique value with $\argwprime<\agw$, such that 
			\begin{equation*}
				F_{\argwprime,\alphamin}(A)=G_{\argwprime,\alpha_{\GW}^\star}(\mu_1)\,.
			\end{equation*}
		\end{enumerate}
	\end{theorem}

	\section{Relation with sequential optimality}
	
	Our results concern single-round e-variables and optimise worst-case expected logarithmic growth over a composite alternative specified in advance. A related but distinct line of work considers sequential betting strategies that do not require knowledge of the data-generating alternative. For bounded mean testing, strategies based on empirical plug-ins or universal-portfolio methods adapt their betting parameters to the observations~\citep{waudbysmith2024estimating,orabona2024tight}. More recent results show that strategies satisfying sublinear portfolio regret are universally log-optimal: for every fixed alternative distribution $Q$, their asymptotic logarithmic growth rate matches that of the oracle constant betting parameter tailored to $Q$~\citep{waudbysmith2025universal}. Related instance-optimal guarantees also appear in \citet[Theorem~7.22]{ramdas2025hypothesis} and \citet[Theorem~3]{wang2026ebacktesting}.
	
	There is a natural connection with REGROW, but the two approaches address different problems. Universal-portfolio methods choose the betting parameters adaptively, without specifying an alternative in advance, and aim to perform asymptotically as well as the best fixed betting parameter for the data-generating distribution. REGROW instead selects a single e-variable before observing the data, minimising over a prespecified composite alternative the largest gap from each distribution-specific GRO benchmark. Thus, the former provides an adaptive long-run guarantee, whereas the latter gives a one-round worst-case guarantee.
	
	A natural direction for future work is to extend our analysis to the multi-round setting. Exact REGROW strategies are likely to be difficult to characterise in general, since the optimisation would then range over adaptive betting strategies rather than a single e-variable. This raises the question of whether suitable universal-portfolio or mixture constructions can provide tractable approximations. In particular, the connection between REGROW, minimax redundancy, and Jeffreys-type mixtures identified by \citet{grunwald2024safe} suggests that analogous mixtures over coin-betting parameters may be relevant for repeated bounded mean testing. We leave a formal investigation of this connection to future work.

\end{document}